\theoremstyle{plain}
\newtheorem{theorem}{Theorem}
\theoremstyle{definition}
\theoremstyle{remark}
\numberwithin{equation}{section}
\newcommand{\ko}{{\mathcal O}}
\newcommand{\IC}{{\mathbb C}}
\newcommand{\IP}{{\mathbb P}}
\newcommand{\IZ}{{\mathbb Z}}
\newcommand{\gothh}{{\mathfrak h}}
\DeclareMathOperator{\orb}{orb}
\newcommand{\isom}{\cong}
\newcommand{\tensor}{\otimes}
\DeclareMathOperator{\Sym}{Sym}
\DeclareMathOperator{\Hom}{Hom}
\DeclareMathOperator{\Ext}{Ext}
\DeclareMathOperator{\LieSp}{Sp}
\DeclareMathOperator{\LieSl}{SL}
\DeclareMathOperator{\LieGl}{GL}
\DeclareMathOperator{\LieSO}{SO}
\DeclareMathOperator{\LieSU}{SU}
\DeclareMathOperator{\Spec}{Spec}
\DeclareMathOperator{\Hilb}{Hilb}
\DeclareMathOperator{\sing}{sing}
\DeclareMathOperator{\red}{red}
\newcommand{\q}{q}%%{\sqrt{-3}\,}
\begin{document}

\title{A symplectic resolution for the binary tetrahedral group}
\date{6 October 2008}

\author{Manfred Lehn \& Christoph Sorger}
\address{
Manfred Lehn\\
Fach\-be\-reich Ma\-the\-ma\-tik und In\-for\-ma\-tik\\
Johannes Gu\-ten\-berg-Uni\-ver\-si\-t\"{a}t Mainz\\
D-55099 Mainz } \email{lehn@mathematik.uni-mainz.de}

%\author{Christoph Sorger}
\address{
Christoph Sorger\\
Laboratoire de Math\'{e}matiques Jean Leray (UMR 6629 du CNRS)\\
Universit\'{e} de Nantes\\
2, Rue de la Houssini\`{e}re\\
BP 92208\\
F-44322 Nantes Cedex 03, France}
\email{christoph.sorger@univ-nantes.fr}

\subjclass{Primary 14B05; Secondary 14E15, 13P10}

\begin{abstract}
We describe an explicit symplectic resolution for the quotient
singularity arising from the four-dimensional symplectic represenation
of the binary tetrahedral group.
\end{abstract}

\maketitle

%%%%%%%%%%%%%%%%%%%%%%%%%%%%%%%%%%%%%%%%%%%%%%%%%%%%%%%%%%%%%%%%%%%%%%%%%%%%%
%%%%%%%%%%%%%%%%%%%%%%%%%%%%%%%%%%%%%%%%%%%%%%%%%%%%%%%%%%%%%%%%%%%%%%%%%%%%%

Let $G$ be a finite group with a complex symplectic representation $V$. The
symplectic form $\sigma$ on $V$ descends to a symplectic form $\bar \sigma$
on the open regular part of $V/G$. A proper morphism $f:Y\to V/G$ is a symplectic
resolution if $Y$ is smooth and if $f^*\bar\sigma$ extends to a symplectic form
on $Y$. It turns out that symplectic resolutions of quotient singularities are
a rare phenomenon. By a theorem of Verbitsky \cite{Verbitsky}, a necessary condition
for the existence of a symplectic resolution is that $G$ be generated by
symplectic reflections, i.e.\ by elements whose fix locus on $V$ is a linear
subspace of codimension 2. Given an arbitrary complex representation $V_0$
of a finite group $G$, we obtain a symplectic representation on $V_0\oplus V_0^*$,
where $V_0^*$ denotes the contragradient representation of $V_0$. In this case,
Verbitsky's theorem specialises to an earlier theorem of Kaledin \cite{Kaledin}:
For $V_0\oplus V_0^*/G$ to admit a symplectic resolution, the action of $G$ on $V_0$
should be generated by complex reflections, in other words, $V_0/G$ should be
smooth. The complex reflection groups have been classified by Shephard and Todd
\cite{ShephardTodd}, the symplectic reflection groups by Cohen \cite{Cohen}.
The list of Shephard and Todd contains as a sublist the finite Coxeter groups.

The question which of these groups $G\subset \LieSp(V)$ admits a symplectic
resolution for $V/G$ has been solved for the Coxeter groups by Ginzburg and Kaledin
\cite{GinzburgKaledin} and for arbitrary complex reflection groups most
recently by Bellamy \cite{Bellamy}. His result is as follows:

\begin{theorem}\label{thm:A} {\em (Bellamy)} ---
If $G\subset \LieGl(V_0)$ is a finite complex reflection group, then $V_0\oplus V_0^*/G$
admits a symplectic resolution if and only if $(G,V_0)$ belongs to the following cases:
\begin{enumerate}
\item $(S_n,\gothh)$, where the symmetric group $S_n$ acts by permutations on the hyperplane $\gothh=\{x\in\IC^n\;|\;\sum_ix_i=0\}$.
\item $((\IZ/m)^n\rtimes S_n,\IC^n)$, the action being given by multiplication with $m$-th roots
of unity and permutations of the coordinates.
\item $(T, S_1)$, where $S_1$ denotes a two-dimensional representation of the binary tetrahedral group $T$
(see below).
\end{enumerate}
\end{theorem}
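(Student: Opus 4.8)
The plan is to convert the existence question into a smoothness question for Calogero--Moser spaces. The hypothesis that $G\subset\LieGl(V_0)$ is a complex reflection group is exactly the Verbitsky--Kaledin necessary condition \cite{Kaledin} recalled above, so what remains is to decide, among all such $(G,V_0)$, which ones actually carry a resolution. By the theorem of Ginzburg and Kaledin \cite{GinzburgKaledin} together with Namikawa's characterisation of symplectic resolutions via Poisson deformations, $V_0\oplus V_0^*/G$ admits a symplectic resolution if and only if the generic fibre of its universal Poisson deformation is smooth; and for a symplectic quotient singularity this deformation is realised by the Calogero--Moser family $X_c=\Spec Z_c$, the spectrum of the centre of the rational Cherednik algebra $H_c$ at $t=0$, depending on a reflection parameter $c$. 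Hence it suffices to determine for which $(G,V_0)$ the variety $X_c$ is smooth for generic $c$. For the families (1) and (2) smoothness is classical, the resolutions being the Hilbert scheme $\Hilb^n(\IC^2)$ and the Nakajima quiver varieties attached to $\IC^2/(\IZ/m)$; so the substance of the statement is the non-existence for every other reflection group, together with the single exceptional positive case $G_4\cong T$.

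First I would reduce to the case where $G$ acts irreducibly on $V_0$: whenever $V_0$ decomposes as a $G$-representation, or $G$ decomposes as a direct product, $X_c$ factors as a product of lower-dimensional Calogero--Moser spaces, and a product is smooth if and only if each factor is. This brings in the Shephard--Todd classification \cite{ShephardTodd}: the irreducible complex reflection groups are the series $G(m,p,n)$ together with the $34$ exceptional groups $G_4,\dots,G_{37}$. The decisive second reduction is local. By the \'etale-local structure theorem for Calogero--Moser spaces (Bezrukavnikov--Etingof), a neighbourhood of a point of $X_c$ lying over the stratum of $V_0\oplus V_0^*/G$ whose pointwise stabiliser is a parabolic subgroup $G'\subseteq G$ is isomorphic to $X_{c'}(G',\gothh_{G'})\times\IC^{2k}$, where $\gothh_{G'}$ is the part of $V_0$ on which $G'$ has no nonzero fixed vector. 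Consequently $X_c(G)$ is smooth if and only if, for every parabolic $G'\subseteq G$, the space $X_{c'}(G')$ is smooth along its $0$-dimensional symplectic leaves. Everything is thereby reduced to an inductive question about the cuspidal points of each irreducible group, the rank-one (cyclic) case being the base of the induction and always smooth.

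The smoothness test at a $0$-dimensional leaf I would read off from the restricted rational Cherednik algebra $\bar H_c$. Its blocks are indexed by these leaves and induce a partition of $\mathrm{Irr}(G)$, the Calogero--Moser partition; a leaf is a smooth point of $X_c$ exactly when its block is a singleton, equivalently when every simple $\bar H_c$-module attains the generic dimension $|G|$. So the whole problem becomes: for which irreducible $(G,V_0)$, and generic $c$, is the Calogero--Moser partition of $\mathrm{Irr}(G)$ the discrete partition? For $G(m,1,n)$ and for $S_n=G(1,1,n)$ on its reflection representation $\gothh$ this holds, in accordance with the smooth quiver-variety and Hilbert-scheme models, whereas for $G(m,p,n)$ with $p>1$ --- in particular for the dihedral groups $G(m,m,2)$ with $m\geq5$ --- a block of size at least two persists and produces a singular cuspidal point.

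The main obstacle is the exceptional groups: one must prove that among $G_4,\dots,G_{37}$ the Calogero--Moser partition is nontrivial for every group of rank at least two except $G_4$, and discrete for $G_4$ itself. I would control the partition through the scalar invariants that must agree for two characters to lie in a common block --- the central character by which the Euler element of $H_c$ acts on each baby Verma module $\Delta_c(\lambda)$ --- and show, for generic $c$, that these invariants collide for at least two characters of every exceptional group other than $G_4$, forcing a nontrivial block and hence a singularity, while for $G_4$ the seven irreducible characters stay separated and a direct computation confirms that $X_c(G_4)$ is smooth. This case-by-case verification, best organised using the known character tables and reflection data (or a computer-algebra check), is the computational heart of the argument. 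Finally, the smoothness of $X_c(G_4)$ yields the existence of a symplectic resolution in case (3) through \cite{GinzburgKaledin}; producing that resolution explicitly is precisely the goal of the rest of this paper.
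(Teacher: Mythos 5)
A preliminary remark on the comparison itself: the paper does \emph{not} prove this statement. It is quoted as Bellamy's theorem \cite{Bellamy} purely as motivation, and the paper's own contribution starts afterwards, with the explicit construction of a resolution in case (3). So your proposal can only be measured against Bellamy's published argument, and in outline you have reconstructed it faithfully: reduce the existence of a symplectic resolution to generic smoothness of the Calogero--Moser space $X_c$ via \cite{GinzburgKaledin} and the universality of the Calogero--Moser family as a Poisson deformation, translate smoothness into triviality of the Calogero--Moser partition of $\mathrm{Irr}(G)$ given by the blocks of the restricted rational Cherednik algebra, and then run through the Shephard--Todd classification \cite{ShephardTodd}, the only surprise being $G_4\cong T$. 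One anachronism: Bellamy's proof does not use the Bezrukavnikov--Etingof parabolic (\'etale-local) reduction you invoke; he reduces $G(m,p,n)$ to $G(m,1,n)$ by Clifford theory. Your organisation is viable, but it is really the later, refined treatment of the subject.

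As a proof, however, the proposal has a genuine gap: everything that is hard is asserted rather than proved. First, the mathematical core of the theorem is exactly the step you compress into one sentence and defer to ``a computer-algebra check'': that for generic $c$ the partition is non-trivial for every irreducible exceptional group of rank at least $2$ other than $G_4$, and trivial for $G_4$. The Euler-element central character you propose gives only a \emph{necessary} condition for two characters to lie in one block; a collision of these scalars never by itself forces a non-trivial block, and conversely their non-collision for $G_4$ does not prove smoothness --- one needs genuine block-size arguments (baby Verma modules, fixed-point and dimension counts) in both directions, and these are precisely Bellamy's contribution. Second, your blanket claim that $G(m,p,n)$ with $p>1$ always yields a singular $X_c$ is false as stated: the reflection-group coincidences $G(3,3,2)\cong G(1,1,3)$, $G(4,4,2)\cong G(2,1,2)$ and $G(2,2,3)\cong G(1,1,4)$ land in your smooth list, so these cases must be excluded and the theorem's trichotomy read up to isomorphism of reflection groups. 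Finally, in the equivalence ``resolution exists iff $X_c$ is smooth for generic $c$'', the only-if direction is not formal; it rests on the identification of the Calogero--Moser family with (a finite quotient of) the universal graded Poisson deformation of $V_0\oplus V_0^*/G$, which you correctly attribute to \cite{GinzburgKaledin} and Namikawa but should flag as a substantive input rather than a known tautology.
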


However, the technique of Ginzburg, Kaledin and Bellamy does not provide resolutions
beyond the statement of existence. Case 1 corresponds to Coxeter groups of type $A$
and Case 2 with $m=2$ to Coxeter groups of type $B$. It is well-known that symplectic
resolutions of $\gothh\oplus \gothh^*/S_n$ and $\IC^n\oplus\IC^n/(\IZ/m)^n\rtimes S_n\isom \Sym^n(\IC^2/(\IZ/m))$ are given as follows:

For a smooth surface $Y$ the Hilbert scheme $\Hilb^n(Y)$ of generalised $n$-tuples of points
on $Y$ provides a crepant resolution $\Hilb^n(Y)\to \Sym^n(Y)$. Applied to a minimal resolution of
the $A_{m-1}$-singularity $\IC^2/G$, $G\isom\IZ/m$, this construction yields a small resolution $\Hilb^n(\widetilde{\IC^2/G})\to \Sym^n(\widetilde{\IC^2/G})\to \Sym^n(\IC^2/G)$. Similarly,
$(\gothh\oplus \gothh^*)/S_n$ is the fibre over the origin of the barycentric
map $\Sym^n(\IC^2)\to \IC^2$. Thus $(\gothh\oplus \gothh^*)/S_n$ is resolved symplectically
by the null-fibre of the morphism $\Hilb^n(\IC^2)\to\Sym^n(\IC^2)\to \IC^2$.

It is the purpose of this note to describe an explicit symplectic resolution for the binary
tetrahedral group.

\section{The binary tetrahedral group}

Let $T_0\subset \LieSO(3)$ denote the symmetry group of a regular tetrahedron.
The preimage of $T_0$ under the standard homomorphism $\LieSU(2)\to \LieSO(3)$ is the
binary tetrahedral group $T$. As an abstract group, $T$ is the semidirect product of
the quaternion group $Q_8=\{\pm 1,\pm I,\pm J,\pm K\}$ and the cyclic group $\IZ/3$.
As a subgroup of $\LieSU(2)$ it is generated by the elements
$$I=\left(\begin{matrix}i&0\\0&-i
    \end{matrix}\right)
\quad\text{ and }\quad
\tau=-\frac12\left(\begin{matrix}
1+i&-1+i\\1+i&1-i
\end{matrix}\right)
$$
The binary tetrahedral group has 7 irreducible complex representations: A three-dimensional
one arising from the quotient $T\to T_0\subset\LieSO_3$, three one-dimensional representations
$\IC_j$ arising from the quotient $T\to \IZ/3$ with $\tau$ acting by $e^{2\pi ij/3}$, and
three two-dimensional representations $S_0$, $S_1$ and $S_2$. Here $S_0$ denotes the
standard representation of $T$ arising from the embedding $T\subset \LieSU_2$. This representation
is symplectic, its quotient $S_0/T$ being the well-known Klein-DuVal singularity of type $E_6$.
The two other representations can be written as $S_j=S_0\tensor \IC_j$, $j=1,2$. They are dual
to each other. It is as the subgroup of $\subset\LieGl(S_1)$ that $T$ appears in the list of Shephard
and Todd under the label ``No.~4''. The diagonal action of $T$ on $S_1\oplus S_2$ provides the
embedding of $T$ to $\LieSp_4$ that is of interest in our context.

Whereas the action of $T$ on $S_0$ is symplectic, the action of $T$ on $S_1$ and $S_2$ is generated
by complex reflections of order 3. Overall, there are 8 elements of order 3 in $T$ or rather
4 pairs of inverse elements, forming 2 conjugacy classes. To these correspond 4 lines in $S_1$ of
points with nontrivial isotropy groups. Let $C_1\subset S_1$ and $C_2\subset S_2$ denote the union
of these lines in each case. Then $C_1\times S_2$ and $S_1\times C_2$ are invariant divisors in
$S_1\oplus S_2$. However, the defining equations
are invariant only up to a scalar. Consequently, their images $W_1$ and $W_2$ in the
quotient $Z=S_1\oplus S_2/T$ are Weil divisors but not Cartier. The reduced singular locus
$\sing(Z)$ is irreducible and off the origin a transversal $A_2$ singularity. It forms one component
of the intersection $W_1\cap W_2$.

For $j=1,2$, let $\alpha_j:Z'_j\to Z$ denote the blow-up along $W_j$.
Next, let $W_j'$ be the reduced singular of locus $Z_j'$, and let $\beta_j:Z_j''\to Z_j'$ denote
the blow-up along $W_j'$.

\begin{theorem}--- The morphisms $\sigma_j=\alpha_j\beta_j:Z_j''\to Z$, $j=1,2$, are
symplectic resolutions.
\end{theorem}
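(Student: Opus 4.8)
The plan is to reduce to a single index and then check the three defining properties of a symplectic resolution: properness, smoothness of the total space, and the extension of the pulled-back form to a nondegenerate $2$-form. First I would exploit a symmetry. The binary tetrahedral group $T$ has index two in the binary octahedral group $T\subset 2O\subset\LieSU_2$, and conjugation by an element of $2O\setminus T$ induces an outer automorphism of $T$ that interchanges the two conjugacy classes of order-three elements, hence the characters $\IC_1,\IC_2$ and therefore the representations $S_1=S_0\tensor\IC_1$ and $S_2=S_0\tensor\IC_2$. Realised by a linear map $S_1\oplus S_2\to S_1\oplus S_2$, it descends to an automorphism of $Z$ that preserves the symplectic structure up to scalar and interchanges $C_1\leftrightarrow C_2$, hence $W_1\leftrightarrow W_2$, and therefore the entire construction for $j=1$ with that for $j=2$. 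It thus suffices to treat $\sigma_1$. Properness and birationality are immediate: $\alpha_1$ and $\beta_1$ are blow-ups along closed subschemes, hence projective, and each is an isomorphism over the locus where its centre is already Cartier; in particular $\sigma_1$ restricts to an isomorphism over the regular locus $Z^{\mathrm{reg}}$.

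The technical core is an explicit model for $Z$ and its two blow-ups. Choosing coordinates $x_1,x_2$ on $S_1$ and dual coordinates $y_1,y_2$ on $S_2$, so that $\sigma=dx_1\wedge dy_1+dx_2\wedge dy_2$, I would first compute the invariant ring $R=\IC[x_1,x_2,y_1,y_2]^T$. Since $T$ acts on $S_1$ as the Shephard--Todd reflection group No.~4, the subring $\IC[S_1]^T$ is polynomial with generators in degrees $4$ and $6$, and likewise for $S_2$; adjoining the mixed invariants (whose degrees are predicted by the Molien series) and reducing with Gr\"obner bases yields a presentation $R=\IC[a_1,\dots,a_N]/I$, i.e. a closed embedding $Z=\Spec R\hookrightarrow\IC^N$. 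The four fixed lines $C_1\subset S_1$ are cut out by a degree-$4$ relative invariant $\delta$, and the ideal $J_1\subset R$ of $W_1$ — the invariants vanishing on $\{\delta=0\}\subset S_1\oplus S_2$ — is obtained as the intersection $R\cap(\delta)$ by elimination. Forming the Rees algebra of $J_1$ realises $Z_1'=\Bl_{W_1}Z$ as a closed subscheme of $Z\times\IP^{k-1}$, where $k$ is the number of generators of $J_1$; I would then locate its reduced singular locus $W_1'$ in each affine chart and blow it up to obtain $Z_1''$.

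The main obstacle is the chart-by-chart proof that $Z_1''$ is smooth, for here one blows up a \emph{singular} variety $Z$ along a \emph{non-Cartier} centre $W_1$ whose own singularities run into the deepest stratum, the image of the origin. Away from the origin the situation is under control: there $\sing(Z)$ carries a transverse $A_2$ singularity, and on the transverse slice $W_1$ restricts to a ruling, a Weil but non-Cartier divisor. Blowing it up makes the slice smooth in one chart and leaves a single $A_1$ point in the other; blowing up that point then smooths it. Thus $\alpha_1$ followed by $\beta_1$ recovers, transversally, the minimal resolution of $A_2$, and $Z_1''$ is automatically smooth over $Z\setminus\{0\}$. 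The genuine work is therefore confined to the exceptional fibre over the origin, where I would enumerate the finitely many affine charts of $Z_1''$ coming from the two Rees constructions and verify the Jacobian criterion on the explicit equations. I expect this to be the step that forces the heavy computation, since the size of $I$ and $J_1$ makes the intermediate ideals unwieldy by hand.

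It remains to upgrade smoothness to the symplectic property. As $Z=(S_1\oplus S_2)/T$ with $T\subset\LieSp_4$ has symplectic — in particular canonical Gorenstein — singularities, the form $\eta:=\sigma_1^*\bar\sigma$ extends to a regular $2$-form on the smooth variety $Z_1''$, and $\eta$ is nondegenerate precisely when $\eta\wedge\eta$ trivialises the canonical bundle, that is, precisely when $\sigma_1$ is crepant. I would verify this directly and concretely: the Poisson bracket on $S_1\oplus S_2$ descends to $R$, so computing $\{a_i,a_j\}$ once on the generators determines $\bar\sigma$, and carrying these brackets into each chart of $Z_1''$ lets me check nondegeneracy of the induced form there. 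This simultaneously establishes crepancy and exhibits the symplectic form, completing the proof for $\sigma_1$ and, by the symmetry of the first paragraph, for $\sigma_2$.
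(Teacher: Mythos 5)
Your proposal is sound and follows the same overall strategy as the paper: reduce to a single index via the outer symmetry exchanging $S_1$ and $S_2$, compute the invariant ring and an explicit presentation of $Z$, realise the two successive blow-ups through their Rees algebras, control everything away from the origin by the transversal $A_2$-structure, and push the residual verifications through computer algebra (the paper does exactly this with SINGULAR). The genuine differences lie in the two key verifications. For smoothness of $Z_j''$, where you propose a chart-by-chart Jacobian check on the composite blow-up, the paper instead works on $Z_2'$ and identifies the only two singular charts with known models: one is a transversal $A_1$-singularity, the other is isomorphic to $(\gothh_3\oplus\gothh_3^*)/S_3$, the symplectic singularity of Coxeter type $A_2$; it then quotes Haiman's theorem that blowing up the reduced singular locus of the latter yields a small resolution. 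This structural recognition replaces the massive Jacobian computation your route would require. For the symplectic property, where you propose to carry the Poisson brackets of the invariants into every chart and verify nondegeneracy of the resulting bivector — note that a priori the brackets of the chart coordinates $b_k/b_j$ are only rational functions, so their regularity is itself part of what must be checked — the paper instead invokes the standard equivalence, for resolutions of symplectic singularities, of symplectic, crepant and semi-small: since the exceptional divisors dominating $\sing(Z)$ are crepant by the transversal $A_2$ picture, it suffices to exclude divisors over the origin, i.e.\ to check that the central fibre $E=(\alpha_2^{-1}(0))_{\red}$ is two-dimensional and not contained in $\sing(Z_2')$. That single dimension count is far lighter than computing the bivector on every chart. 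Both routes are correct; yours is more direct and self-contained, while the paper's minimises the amount of machine computation that has to be trusted.
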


\begin{proof} As all data are explicit, the assertion can be checked by brute calculation.
To cope with the computational complexity we use the free computer algebra system
SINGULAR\footnote{A documented SINGULAR file containing all the calculations is available from the authors upon request.} \cite{Greuel}. It suffices
to treat one of the two cases of the theorem. We indicate the basic steps for $j=2$.
In order to improve the readability of the formulae we write $q=\sqrt{-3}$.

Let $\IC[x_1,x_2,x_3,x_4]$ denote the ring of polynomial functions on $S_1\oplus S_2$.
The invariant subring $\IC[x_1,x_2,x_3,x_4]^T$ is generated by eight elements, listed in
table 1. The kernel $I$ of the corresponding ring homomorphism
$$\IC[z_1,\ldots,z_8]\to \IC[x_1,x_2,x_3,x_4]^T$$
is generated by nine elements, listed in table 2. The curve $C_2$ is given by the
semiinvariant $x_3^4+2\q x_3^2x_4^2+x_4^4$. In order to keep the calculation as simple
as possible, the following observation is crucial: Modulo $I$, the Weil divisor $W_2$
can be described by 6 equations, listed in table 3. This leads to a comparatively 'small'
embedding $Z_2'\to \IP^5_Z$ of $Z$-varieties.
Off the origin, the effect of blowing-up of $W_2$ is easy to understand even without any
calculation: the action of the quaternion normal subgroup $Q_8\subset T$ on $S_1\oplus
S_2\setminus\{0\}$ is free. The action of $\IZ/3=T/Q_8$ on \hbox{$S_1\oplus S_2/Q_8$} produces
transversal $A_2$-singularities along a smooth two-dimensional subvariety. Blowing-up along
$W_1$ or $W_2$ is a partial resolution: it introduces a $\IP^1$ fibre over each singular
point, and the total space contains a transversal $A_1$-singularity.

\begin{table}
Table 1: generators for the invariant subring $\IC[x_1,x_2,x_3,x_4]^T$:
 $$\begin{array}{ll}
z_1=x_1x_3+x_2x_4,&z_4=x_2x_3^3-\q x_1x_3^2x_4+\q x_2x_3x_4^2-x_1x_4^3,\\
z_2=x_3^4-2\q x_3^2x_4^2+x_4^4,&z_5=x_2^3x_3-\q x_1^2x_2x_3+\q x_1x_2^2x_4-x_1^3x_4,\\
z_3=x_1^4+2\q x_1^2x_2^2+x_2^4,&z_6=x_1^5x_2-x_1x_2^5,\\
z_7=x_3^5x_4-x_3x_4^5&z_8=x_1x_2^2x_3^3-x_2^3x_3^2x_4-x_1^3x_3x_4^2+x_1^2x_2x_4^3.\\
\end{array}$$
\end{table}

\begin{table}
Table 2: generators for $I=\ker(\IC[z_1,\ldots,z_8]\to \IC[x_1,\ldots,x_4]^T)$.
 $$
\begin{array}{ll}
\q z_1^3z_5-z_1z_3z_4-2z_2z_6-z_5z_8,&\quad z_1z_5^2+2z_4z_6+z_3z_8,\\
\q z_1^3z_4+z_1z_2z_5-2z_3z_7-z_4z_8,&\quad z_1z_4^2-2z_5z_7-z_2z_8,\\
-z_1^4+z_2z_3-z_4z_5-3\q z_1z_8,&\quad  \q z_1^2z_3z_5-2z_1^3z_6-z_3^2z_4+z_5^3-6\q z_6z_8,\\
z_1^2z_4z_5+\q z_1^3z_8+4z_6z_7-z_8^2,&\quad \q z_1^2z_2z_4-2z_1^3z_7-z_4^3+z_2^2z_5-6\q z_7z_8,\\
\multicolumn{2}{l}{
4z_1^2z_4z_5+\q z_3z_4^2-\q z_2z_5^2+4z_6z_7+8z_8^2}
\end{array}
$$
\end{table}

\begin{table}
Table 3: generators for the ideal of the Weil divisor $W_2\subset Z$.
$$\begin{array}{lll}
b_1=z_3z_7+2z_4z_8,&
b_2=z_2z_4+2\q z_1z_7,&
b_3=z_2z_3-4\q z_1z_8,\\
b_4=z_2^3+12\q z_7^2,&
b_5=z_1z_2^2-6z_4z_7,&
b_6=z_1^2z_2-\q z_4^2.
\end{array}$$
\end{table}

The homogeneous ideal $I'_2\subset \IC[z_1,\ldots,z_8,b_1,\ldots,b_6]$ that describes the
subvariety $Z_2'\subset \IP^5_Z$ is generated by $I$ and $39$ additional polynomials. In
order to understand the nature of the singularities of $Z_2'$ we consider the six affine
charts $U_\ell=\{b_\ell=1\}$. The result can be summarised like this: The singular locus
of $Z_2'$ is completely contained in $U_2\cup U_3$, so only these charts are relevant for
the discussion of the second blow-up. In fact, the corresponding affine coordinate rings
have the following description:
$$R_2=\IC[z_1,b_3,b_4,b_5,b_6]/ (b_5b_6-2\q z_1)^2+b_4(3\q b_3-b_6^3)$$
is a transversal $A_1$-singularity.
$$R_3=\IC[z_1,z_3,z_5,z_6,b_1,b_2,b_6]/J,$$ where $J$ is generated by five elements,
listed in table 4. Inspection of these generators shows that $\Spec(R_2)$ is isomorphic
to the singularity $(\gothh_3\oplus\gothh_3^*)/S_3$, the symplectic singularity of
Coxeter type $A_2$ that appears as case 1 in Bellamy's theorem. It is well-known that
blowing up the singular locus yields a small resolution. For arbitrary $n$, this is
a theorem of Haiman \cite[Prop.\ 2.6]{Haiman}, in our case it is easier to do
it directly. Thus blowing-up the reduced singular locus of $Z_2'$ produces a smooth
resolution $Z_2''\to Z$.

\begin{table}
Table 4: generators for the ideal sheaf $J$ of $Z_2'\subset \IC^7$ in the third chart:
$$\begin{array}{lll}
4z_1b_1+\q z_3b_2+z_5b_6,&
z_1z_5+z_3b_1+\q z_6b_6,\\
z_1^2b_6-z_3b_6^2-4\q b_1^2-3 z_5b_2,&
z_1^2z_3-z_3^2b_6-\q z_5^2-12z_6b_1,\\
z_1^3-z_1z_3b_6+\q z_5b_1+3\q z_6b_2
\end{array}$$
\end{table}

It remains to check that the morphism $\alpha_2:Z_2\to Z$ is semi-small. For this it
suffices to verify that the fibre $E=(\alpha_2^{-1}(0))_{\red}$ over the origin is
two-dimensional and not contained in the singular locus of $Z_2'$. Indeed, the computer
calculation shows that $E\subset \IP^5$ is given by the equations $b_1$,  $b_3b_5$, $b_3b_4$,
$b_5^2-b_4b_6$ and hence is the union of two irreducible
surfaces. The singular locus of $Z_2'$ is irreducible and two-dimensional and dominates
the singular locus of $Z$. Thus the second requirement is fulfilled, too.
\end{proof}

\section{The equivariant Hilbert scheme}

Though the description of the resolution is simple and straight the method of proof is less
satisfying. It is based on explicit calculation that given the complexity of the singularity
we were able to carry out only by means of appropriate software.
Remark that even for the classical ADE-singularities arising from finite subgroups
$G\subset\LieSU(2)$ the actual resolutions of $\IC^2/G$ could only be described by explicit
calculations. The difference to our case essentially is one of complexity: The dimension is
four instead of two, there are 8 basic invariants satisfying 9 relations instead of Klein's
three invariants with a single relation, and the singular locus is itself a complicated singular variety instead
of an isolated point. The first construction that resolved the ADE-singularities in a uniform way was
given by Kronheimer in \cite{Kronheimer} in terms of certain hyper-K\"ahler quotients. Later
Ito and Nakamura \cite{ItoNakamura} used $G$-Hilbert schemes to the same effect.

Recall that given a scheme $X$ with an action of a finite group $G$ the equivariant Hilbert
scheme $G$-$\Hilb(X)$ is the moduli scheme of zero-dimensional equivariant subschemes
$\xi\subset X$ such that $H^0(\xi,\ko_\xi)$ is isomorphic to the regular representation of $G$.
There is a canonical morphism $G$-$\Hilb(X)\to X/G$ which is an isomorphism over the open subset
that corresponds to regular orbits. For a linear action $G\subset \LieSl(V)$ it is known that
$G$-$\Hilb(V)$ is smooth if $\dim(V)=2$ or $3$ and provides a crepant resolution of $V/G$
(see \cite{BKR}).

Thus $\rho: H:=T$-$\Hilb(S_1\oplus S_2)\to Z=(S_1\oplus S_2)/T$ is a natural candidate for a resolution.
As the generic singularity of $Z$ is a transversal $A_2$ singularity that arises
from a $\IZ/3$-action it is clear that $\rho$ is a crepant resolution off the origin. However, it
turns out that $H$ has two irreducible components that are smooth and intersect transversely.
One of them is the closure $H^{\orb}$ of the locus of regular orbits, it dominates the quotient $Z$.
This orbit component also appears as 'dynamical component' or 'coherent component' in the literature.
Any other component of $H$ must be contained in the fibre $\rho^{-1}(0)$, though this is not true in
general.

The two factors of the group $\IC^*\times\IC^*$ act on $S_1\oplus S_2$ via dilations on the
first and second summand, respectively, and the polynomial ring $\IC[S_1\oplus S_2]$ may
accordingly be decomposed into irreducible $T\times \IC^*\times \IC^*$--representations, the first
terms being
\begin{eqnarray*}\IC[S_1\oplus S_2]&=&L_0\oplus (S_1x \oplus S_2y)\oplus R_0x^2\oplus (R_0\oplus L_0)xy \oplus R_0y^2\\
&& \oplus (S_1\oplus S_2)(x^3\oplus y^3) \oplus (S_0\oplus S_1\oplus S_2)(x^2y\oplus xy^2)\oplus \ldots
\end{eqnarray*}
where $x$ and $y$ are formal symbols indicating the weight with respect to the $\IC^*\times \IC^*$ action.
Using this decomposition one can see that $H$ contains a further component isomorphic to
$\IP^2\times\IP^2$: if $I\in H$ is to be an ideal contained in the square of the maximal ideal generated
by $S_1x\oplus S_2y$, of the respectively three copies of $S_1$ and $S_2$ of total weight 3 two have
to be contained in $I$. The possible choices amount to picking a line in a three-dimensional space for each
of $S_1$ and $S_2$. Of course, one still needs to check that every choice really leads to an admissable ideal.

As the map $\rho$ is proper, each equivariant closed subset of $H$ must contain fixed points for the
$\IC^*\times \IC^*$-action. These correspond to $T$-equivariant bihomogeneous ideals $I\subset \IC[S_1\oplus S_2]$.
Using the given decomposition of the coordinate ring it is not difficult to see
that there are 13 such fixed points $I_i\in H$. The tangent space to $H$ at $I_i$ ist given by
$\Hom_T(I_i, \IC[S_1\oplus S_2]/I_i)$. An explicit calculation shows that the dimension of the tangent
space is 4 in seven points (necessarily smooth points of $H$) and is 5 in six other points. The calculation of
the quadratic component of the analytic obstruction or Kuranishi map $\Hom_T(I_i, \IC[S_1\oplus S_2]/I_i)\to
\Ext^1_T(I_i, \IC[S_1\oplus S_2]/I_i)$ yields in all cases a reducible quadric with two distinct factors. This
suffices to conclude that there are no further components of $H$, that $H^{\orb}$ is smooth and that the two
components  $H^{\orb}$ and $\IP^2\times \IP^2$ intersect transversely. By the universal property of the blow-up
there is a commuting diagram
$$\begin{array}{ccccc}
&&H^{\orb}\\
&\swarrow&&\searrow\\
Z_1''&&&&Z_2''\\
&\searrow&&\swarrow\\
&&Z
  \end{array},
$$
which conjecturely relates the two-resolutions by a Mukai-flop. In fact, we found the two resolutions first by contracting
local models of $H$ that are given as subschemes of a relative Grassmannian over $Z$.
The calculations so far described are insufficient to formally prove that $H^{\orb}$ and $\IP^2\times\IP^2$ intersect along the incidence
variety and that this intersection is  the exceptional locus for the two contractions.
However, recall that equivariant Hilbert schemes can be seen as special cases of quiver varieties
(\cite{Nakajima}, \cite{King}) for the McKay quiver \cite{McKay} associated to the given action. As the referee
suggests one might try to obtain the diagram above and resolutions of $Z$ in a single stroke by a variation of the
stability condition in the construction of the quiver variety. As the McKay quiver for the action of $T$ on $S_0$
is the Dynkin graph of type $E_6$, it is easy to deduce the graph underlying the McMay quiver for the action of $T$
on $S_1\oplus S_2$:
\begin{center}
\unitlength=.8mm
\begin{picture}(50,50)
\put(20,20){
\put(0,0){$R$}
\put(24,0){$L_0$}
\put(-24,0){$S_0$}
\put(12,20){$S_1$}
\put(12,-20){$S_2$}
\put(-12,20){$L_2$}
\put(-12,-20){$L_1$}
\put(-5,22){\line(1,0){15}}
\put(-5,-18){\line(1,0){15}}
\put(-2,2.5){\line(-1,0){15}}
\put(-2,1.5){\line(-1,0){15}}
\put(5,6){\line(1,2){5}}
\put(4,6){\line(1,2){5}}
\put(5,-4){\line(1,-2){5}}
\put(4,-4){\line(1,-2){5}}
\put(-19,6){\line(1,2){5}}
\put(-19,-4){\line(1,-2){5}}
\put(24,6){\line(-1,2){5}}
\put(24,-4){\line(-1,-2){5}}
}
\end{picture}
\end{center}

%%%%%%%%%%%%%%%%%%%%%%%%%%%%%%%%%%%%%%%%%%%%%%%%%%%%%%%%%%%%%%%%%%%%%%%%%%
%%%
%%%   Referenzliste
%%%
%%%%%%%%%%%%%%%%%%%%%%%%%%%%%%%%%%%%%%%%%%%%%%%%%%%%%%%%%%%%%%%%%%%%%%%%%%
\bibliographystyle{plain}

\parindent0mm

\end{document}